\documentclass[reqno,b5paper]{amsart}
\usepackage{amsmath}
\usepackage{amssymb}
\usepackage{amsfonts}
\usepackage{graphics}
\usepackage{amsthm}
\usepackage{enumerate}
\usepackage[mathscr]{eucal}
\setlength{\textwidth}{121.9mm}
\setlength{\textheight}{176.2mm}
\newtheorem{theorem}{Theorem}[section]
\newtheorem{lemma}{Lemma}[section]
\newtheorem{corollary}{Corollary}[section]
\newtheorem*{acknowledgement}{\textnormal{\textbf{Acknowledgement}}}

\begin{document}

\title[A note on trans-Sasakian manifolds]
{A note on trans-Sasakian manifolds}
\author[Sharief Deshmukh \and Mukut Mani Tripathi]
{Sharief Deshmukh* \and Mukut Mani Tripathi**}
\newcommand{\acr}{\newline\indent}

\address{\llap{*\,}Department of Mathematics\acr
College of Science\acr
King Saud University\acr
P.O. Box 2455, Riyadh 11451\acr
SAUDI ARABIA}
\email{shariefd@ksu.edu.sa}

\address{\llap{**\,}Department of Mathematics and DST-CIMS\acr
Faculty of Science\acr
Banaras Hindu University\acr
Varanasi 221005\acr
INDIA}
\email{mmtripathi66@yahoo.com}

\thanks{This work was supported by King Saud University, 
Deanship of Scientific Research, College of Science Research Center}

\subjclass[2010]{Primary 53C15; Secondary (optional) 53D10}
\keywords{Almost contact metric manifold, Sasakian manifold,
Trans-Sasakian manifold}
\begin{abstract}
In this paper, we obtain some sufficient conditions for a
$3$-dimensional compact trans-Sasakian manifold of type $(\alpha ,\beta )$
to be homothetic to a Sasakian manifold. A characterization of a $3$%
-dimensional cosymplectic manifold is also obtained.
\end{abstract}

\maketitle

\section{Introduction}

Let $(M,\varphi ,\xi ,\eta ,g)$ be a $(2n+1)$-dimensional 
almost contact metric manifold (cf. \cite{1}). 
Then the product $\overline{M}=M\times R$ has a
natural almost complex structure $J$ with the product metric $G$ being
Hermitian metric. The geometry of the almost Hermitian manifold $(\overline{M%
},J,G)$ dictates the geometry of the almost contact metric manifold $%
(M,\varphi ,\xi ,\eta ,g)$ and gives different structures on $M$ like
Sasakian structure, quasi-Sasakian structure, Kenmotsu structure and others
(cf. \cite{1}, \cite{2}, \cite{7}). 
It is known that there are sixteen different types of
structures on the almost Hermitian manifold $(\overline{M},J,G)$ (cf. \cite{5})
and using the structure in the class $\mathcal{W}_{4}$ \ on $(\overline{M}%
,J,G)$, a structure $(\varphi ,\xi ,\eta ,g,\alpha ,\beta )$ on $M$ called
trans-Sasakian structure, was introduced (cf. \cite{12}) that generalizes
Sasakian and Kenmotsu structures on a contact metric manifold (cf. \cite{2},
\cite{7}), where $\alpha ,\beta $ are smooth functions defined on $M$. Since the
introduction of trans-Sasakian manifolds, very important contributions of
Blair and Oubi\~{n}a \cite{2} and Marrero \cite{10} have appeared, studying the
geometry of trans-Sasakian manifolds. In general a trans-Sasakian manifold $%
(M,\varphi ,\xi ,\eta ,g,\alpha ,\beta )$ is called a trans-Sasakian
manifold of type $(\alpha ,\beta )$. Trans-Sasakian manifolds of type $(0,0)$%
, $(\alpha ,0)$ and $(0,\beta )$ are called cosymplectic, $\alpha $%
-Sasakian, and $\beta $-Kenmotsu manifolds respectively. Marrero \cite{10} has
shown that a trans-Sasakian manifold of dimension $\geq 5$ is either
cosymplectic, or $\alpha $-Sasakian, or $\beta $-Kenmotsu. Since then, there
is a concentration on studying geometry of $3$-dimensional trans-Sasakian
manifolds only (cf. \cite{0}, \cite{3}, \cite{4}, \cite{8}, \cite{9}),
putting some restrictions on the
smooth functions $\alpha ,\beta $ appearing in the definition of
trans-Sasakian manifolds. There are several examples of trans-Sasakian
manifolds constructed mostly on $3$-dimensional Riemannian manifolds (cf.
\cite{2}, \cite{10}, \cite{12}).
Moreover, as the geometry of Sasakian manifolds is very
rich, and is derived from contact geometry, the question of finding
conditions under which a $3$-dimensional trans-Sasakian manifold is
homothetic to a Sasakian manifold becomes more interesting. In this paper we
consider this question and obtain two different sufficient conditions for a
trans-Sasakian manifold to be homothetic to a Sasakian manifold. One of them
is expressed in terms of the smooth functions $\alpha $, $\beta $ and a
bound on certain Ricci curvature, and the other requires that the Reeb
vector should be an eigenvector of the Ricci operator 
(cf. Theorems~\ref{th-3-1},~\ref{th-3-2}).
We also find a characterization of cosymplectic manifolds (cf.
Theorem~\ref{th-4-1}).

\begin{acknowledgement} 
The authors wish to express their sincere thanks to the referee for
many corrections. The first author also wishes to thank the DST-CIMS at
Banaras Hindu University, Varanasi for the hospitality during his visit to
the center.
\end{acknowledgement} 

\section{Preliminaries}

Let $(M,\varphi ,\xi ,\eta ,g)$ be a $3$-dimensional contact metric
manifold, where $\varphi $ is a $(1,1)$-tensor field, $\xi $ a unit vector
field and $\eta $ a smooth $1$-form dual to $\xi $ with respect to the
Riemannian metric $g$ satisfying%
\begin{equation}
\varphi ^{2}=-I+\eta \otimes \xi \text{, }\varphi (\xi )=0\text{, }\eta
\circ \varphi =0\text{, }g(\varphi X,\varphi Y)=g(X,Y)-\eta (X)\eta (Y)\text{,}  \tag{2.1}
\end{equation}%
$X,Y\in \mathfrak{X}(M)$, where $\mathfrak{X}(M)$ is the Lie algebra of
smooth vector fields on $M$ (cf. \cite{1}). If there are smooth functions $\alpha
,\beta $ on an almost contact metric manifold $(M,\varphi ,\xi ,\eta ,g)$
satisfying%
\begin{equation}
(\nabla \varphi )(X,Y)=\alpha \left( g(X,Y)\xi -\eta (Y)X\right) +\beta
\left( g(\varphi X,Y)\xi -\eta (Y)\varphi X\right)\text{,}  \tag{2.2}
\end{equation}%
then this is said to be a trans-Sasakian manifold, where $(\nabla \varphi
)(X,Y)=\nabla _{X}\varphi Y-\varphi (\nabla _{X}Y)$,\quad $X,Y\in \mathfrak{X%
}(M)$ and $\nabla $ is the Levi-Civita connection with respect to the metric
$g$ (cf. \cite{2}, \cite{10}, \cite{12}). We shall denote this trans-Sasakian manifold by $%
(M,\varphi ,\xi ,\eta ,g,\alpha ,\beta )$ and it is called trans-Sasakian
manifold of type $(\alpha ,\beta )$. From equations (2.1) and (2.2), it
follows that%
\begin{equation}
\nabla _{X}\xi =-\alpha \varphi (X)+\beta (X-\eta (X)\xi )\text{,\quad }X\in
\mathfrak{X}(M)\text{.}  \tag{2.3}
\end{equation}%
It is clear that a trans-Sasakian manifold of type $(1,0)$ is a Sasakian
manifold (cf. \cite{1}) and a trans-Sasakian manifold of type $(0,1)$ is a
Kenmotsu manifold (cf. \cite{7}). A trans-Sasakian manifold of type $(0,0)$ is
called a cosymplectic manifold (cf. \cite{6}).

Let $Ric$ be the Ricci tensor of a Riemannian manifold $(M,g)$. Then the
Ricci operator $Q$ is a symmetric tensor field of type $(1,1)$ defined by $%
Ric(X,Y)=g(QX,Y)$, $X,Y\in \mathfrak{X}(M)$. We prepare some tools for
trans-Sasakian manifolds.

\begin{lemma} \label{lem-2-1}
Let $(M,\varphi ,\xi ,\eta ,g,\alpha ,\beta )$ be a $3$%
-dimensional trans-Sasakian manifold. Then $\xi (\alpha )=-2\alpha \beta $.
\end{lemma}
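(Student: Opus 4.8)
The plan is to extract the scalar identity from the Riemann curvature tensor by exploiting a symmetry. First I would rewrite (2.3) as $\nabla _{X}\xi =A(X)$, where $A$ is the $(1,1)$-tensor field $A(X)=-\alpha \varphi X+\beta (X-\eta (X)\xi )$, and then compute the curvature along $\xi $ via
\begin{equation*}
R(X,\xi )\xi =(\nabla _{X}A)(\xi )-(\nabla _{\xi }A)(X),
\end{equation*}
the Lie-bracket term dropping out because $\nabla $ is torsion-free.

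Second, I would carry out the two covariant derivatives of $A$. Differentiating $A$ produces terms involving $X(\alpha )$, $X(\beta )$, $(\nabla _{X}\varphi )$, $\nabla _{X}\xi $ and $\nabla _{X}\eta $, all of which are controlled by (2.1)--(2.3). The bookkeeping collapses dramatically once one records the auxiliary facts $\varphi \xi =0$, $\nabla _{\xi }\xi =0$, $(\nabla _{\xi }\varphi )X=0$, $(\nabla _{X}\eta )(\xi )=0$, together with the contraction $(\nabla _{X}\varphi )\xi =\alpha (\eta (X)\xi -X)-\beta \varphi X$, each an immediate consequence of the structure equations. Collecting terms, I expect to arrive at
\begin{equation*}
R(X,\xi )\xi =\left( \alpha ^{2}-\beta ^{2}-\xi (\beta )\right) \left( X-\eta (X)\xi \right) +\left( \xi (\alpha )+2\alpha \beta \right) \varphi X.
\end{equation*}

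Finally, I would invoke the block symmetry of the curvature tensor, which makes the operator $X\mapsto R(X,\xi )\xi $ self-adjoint, that is, $g(R(X,\xi )\xi ,Y)=g(R(Y,\xi )\xi ,X)$ for all $X,Y$. Pairing the displayed expression with $Y$, the coefficient of $X-\eta (X)\xi $ contributes the symmetric quantity $g(X,Y)-\eta (X)\eta (Y)$, whereas $g(\varphi X,Y)=-g(\varphi Y,X)$ is skew-symmetric; self-adjointness therefore forces the skew part to vanish, giving $\xi (\alpha )+2\alpha \beta =0$, which is the claim. The only real obstacle is computational, namely the careful reduction of the many terms generated by $(\nabla _{X}A)(\xi )$ and $(\nabla _{\xi }A)(X)$; once the curvature expression is in hand the symmetry argument is immediate, and it is worth noting that this derivation uses neither the dimension nor any restriction on $\alpha ,\beta $.
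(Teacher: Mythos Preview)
Your argument is correct. The computation of $R(X,\xi)\xi$ via $(\nabla_X A)(\xi)-(\nabla_\xi A)(X)$ is valid, and the resulting expression
\[
R(X,\xi)\xi=\bigl(\alpha^{2}-\beta^{2}-\xi(\beta)\bigr)\bigl(X-\eta(X)\xi\bigr)+\bigl(\xi(\alpha)+2\alpha\beta\bigr)\varphi X
\]
checks out. The self-adjointness of the Jacobi operator $X\mapsto R(X,\xi)\xi$ then kills the skew part $g(\varphi X,Y)$, and since $\varphi$ is nonzero this forces $\xi(\alpha)+2\alpha\beta=0$.

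This is, however, a genuinely different route from the paper's proof of the lemma. The paper argues via exterior calculus: from (2.3) one has $d\eta(X,Y)=-2\alpha\, g(\varphi X,Y)$, so the $2$-form $\Omega(X,Y)=\alpha\, g(\varphi X,Y)$ is closed; expanding $d\Omega=0$ with (2.1)--(2.3), applying $\varphi$, and tracing against a local orthonormal frame yields $(\xi(\alpha)+2\alpha\beta)\eta(Y)=0$. Your curvature-plus-symmetry argument avoids the exterior-derivative computation and the trace step entirely, and as you observe it is dimension-free. It is worth noting that the paper does later recover the same identity as a byproduct of the local-frame curvature computations in Section~4 (the ``third component'' of the Bianchi-type sum there), so your approach is in the same spirit as that secondary derivation, though packaged more invariantly.
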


\begin{proof} Using (2.3), we get that 
\[
d\eta (X,Y)=-2\alpha g(\varphi X,Y)
,\qquad X,Y\in \mathfrak{X}(M)
\]
and as a consequence, the $2$-form $\Omega $
defined by $\Omega (X,Y)=\alpha g(\varphi X,Y)$ is closed. Using (2.1),
(2.2), and (2.3) in $d\Omega =0$ after some trivial calculations, we arrive
at
\begin{equation*}
\varphi \left\{ X(\alpha )Y-Y(\alpha )X-2\alpha \beta \eta (Y)X+2\alpha
\beta \eta (X)Y\right\} +g(\varphi X,Y)\left( \nabla \alpha +2\alpha \beta
\xi \right) =0
\end{equation*}%
for all $X,Y\in \mathfrak{X}(M)$. Operating $\varphi $ on the equation above, we get%
\begin{eqnarray*}
&&Y(\alpha )X-X(\alpha )Y+2\alpha \beta \eta (Y)X-2\alpha \beta \eta
(X)Y+X(\alpha )\eta (Y)\xi  \\
&&-Y(\alpha )\eta (X)\xi +g(\varphi X,Y)\varphi \left( \nabla \alpha \right)
\begin{array}{c}
=%
\end{array}%
0\text{.}
\end{eqnarray*}%
For a local orthonormal frame $\left\{ e_{1},e_{2},e_{3}\right\} $ on $M$,
taking $X=e_{i}$ in the equation above, taking the inner product with $e_{i}$
and adding the resulting equations, we get%
\begin{equation*}
\left( 2\alpha \beta +\xi (\alpha )\right) \eta (Y)=0\text{,\quad }Y\in
\mathfrak{X}(M)
\end{equation*}%
which gives%
\begin{equation*}
\left( 2\alpha \beta +\xi (\alpha )\right) \xi =0
\end{equation*}%
and we obtain the result.
\end{proof}

\begin{lemma}
\label{lem-2-2} Let $(M,\varphi ,\xi ,\eta ,g,\alpha ,\beta )$ be a $3$%
-dimensional trans-Sasakian manifold. Then its Ricci operator satisfies%
\begin{equation*}
Q(\xi )=\varphi (\nabla \alpha )-\nabla \beta +2(\alpha ^{2}-\beta ^{2})\xi
-g(\nabla \beta ,\xi )\xi
\end{equation*}%
where $\nabla \alpha ,$ $\nabla \beta $ are gradients of the smooth
functions $\alpha ,$ $\beta $.
\end{lemma}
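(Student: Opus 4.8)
The plan is to obtain $Q\xi$ by first computing the curvature operator $R(X,Y)\xi$ from the structure equation (2.3) and then contracting. Writing the $(1,1)$-tensor $A(X):=\nabla_X\xi = -\alpha\varphi X+\beta\bigl(X-\eta(X)\xi\bigr)$, I would differentiate once more and use torsion-freeness to collapse the non-tensorial pieces, leaving the antisymmetrized covariant derivative
\[
R(X,Y)\xi=(\nabla_X A)(Y)-(\nabla_Y A)(X).
\]
To carry out the differentiation I would invoke (2.2) in the form $\nabla_X(\varphi Y)=(\nabla\varphi)(X,Y)+\varphi\nabla_X Y$, together with $(\nabla_X\eta)(Y)=g(\nabla_X\xi,Y)$, which is read off directly from (2.3). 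The outcome is an expression for $R(X,Y)\xi$ built from three kinds of terms: derivative terms carrying $X(\alpha),Y(\alpha),X(\beta),Y(\beta)$; purely algebraic terms quadratic in $\alpha,\beta$; and terms in which $\xi(\alpha)$ occurs, which I would immediately replace by $-2\alpha\beta$ using Lemma~\ref{lem-2-1}.

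With $R(X,Y)\xi$ in hand, I would recover the Ricci operator from $g(Q\xi,Y)=\sum_i g(R(e_i,Y)\xi,e_i)$ for a local orthonormal frame $\{e_1,e_2,e_3\}$, which follows from $Ric(X,Y)=g(QX,Y)$ and the symmetry of $Ric$. The contraction is exactly where the dimension $3$ enters, through the identities $\sum_i g(e_i,e_i)=3$, $\sum_i\eta(e_i)e_i=\xi$, $\sum_i g(\varphi e_i,Y)e_i=-\varphi Y$, and $\mathrm{tr}\,\varphi=0$. Under these, the algebraic terms collapse to the multiple $2(\alpha^2-\beta^2)\xi$; the $\beta$-derivative terms, which enter (2.3) without a factor of $\varphi$, assemble into $-\nabla\beta$ together with the correction $-g(\nabla\beta,\xi)\xi$ coming from the $\eta(X)\xi$ piece of (2.3); and the $\alpha$-derivative terms, which always appear pre-composed with $\varphi$, assemble into $\varphi(\nabla\alpha)$.

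The main obstacle is not conceptual but the bookkeeping in the second covariant derivative: keeping the numerous $\varphi$-, $\eta$- and $\xi$-contributions organized so that the non-tensorial pieces visibly cancel into $(\nabla_X A)(Y)-(\nabla_Y A)(X)$, and then tracking the sign and placement of each derivative factor $X(\alpha),X(\beta)$ correctly through the frame contraction. A secondary point of care is the consistent use of the curvature and Ricci sign conventions fixed by $Ric(X,Y)=g(QX,Y)$, so that the coefficient of $\xi$ emerges as $+2(\alpha^2-\beta^2)$ rather than with the opposite sign. Lemma~\ref{lem-2-1} is essential precisely at the steps where an unsimplified $\xi(\alpha)$ would otherwise obstruct the collapse of the algebraic terms.
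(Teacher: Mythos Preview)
Your plan is exactly the paper's: compute $R(X,Y)\xi$ from (2.3) using (2.1)--(2.2), then contract against a local orthonormal frame to read off $Ric(Y,\xi)=g(Q\xi,Y)$. One correction: Lemma~\ref{lem-2-1} is not needed here. In the expression for $R(X,Y)\xi$ the only derivative terms that arise are $X(\alpha),Y(\alpha),X(\beta),Y(\beta)$, and after contraction the $\alpha$-derivatives assemble directly into $g(\varphi(\nabla\alpha),Y)$ without ever singling out $\xi(\alpha)$; the $2\alpha\beta$ term in $R(X,Y)\xi$ comes purely from the cross products of the $\alpha$- and $\beta$-pieces of (2.3), not from simplifying $\xi(\alpha)$.
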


\begin{proof}
We use (2.1), (2.2), and (2.3) to calculate 
\[
R(X,Y)\xi =\nabla _{X}\nabla _{Y}\xi -\nabla _{Y}\nabla _{X}\xi 
-\nabla _{[X,Y]}\xi 
\]
and after some easy computations we arrive at%
\begin{eqnarray*}
R(X,Y)\xi &=&Y(\alpha )\varphi X-X(\alpha )\varphi Y+X(\beta )(Y-\eta (Y)\xi
)-Y(\beta )(X-\eta (X)\xi ) \\
&&+(\alpha ^{2}-\beta ^{2})(\eta (Y)X-\eta (X)Y)+2\alpha \beta (\eta
(Y)\varphi X-\eta (X)\varphi Y)\text{.}
\end{eqnarray*}%
The above equation gives%
\begin{eqnarray*}
Ric(Y,\xi ) &=&g(\varphi (\nabla \alpha ),Y)-g(\nabla \beta ,Y)-g(\nabla
\beta ,\xi )\eta (Y) \\
&&+2(\alpha ^{2}-\beta ^{2})\eta (Y),
\end{eqnarray*}%
which proves the result.
\end{proof}

Next, we state the following result of \cite{11}, which we shall use in the
sequel.

\begin{theorem} \label{th-2-1} \cite{11} Let $(M,g)$ be a Riemannian manifold.
If $M$ admits a Killing vector field $\xi $ of constant length satisfying
\begin{equation*}
k^{2}\left( \nabla _{X}\nabla _{Y}\xi -\nabla _{\nabla _{X}Y}\xi \right)
=g(Y,\xi )X-g(X,Y)\xi
\end{equation*}%
for a nonzero constant $k$ and any vector fields $X$ and $Y$, then $M$ is
homothetic to a Sasakian manifold.
\end{theorem}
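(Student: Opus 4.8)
The plan is to recognize the hypothesis as nothing but the curvature characterization of a Sasakian structure, once a constant homothety has been used to absorb the factor $k^{2}$ and the (constant) length of $\xi$. First I would exploit that $\xi$ is Killing: writing $A=\nabla \xi$ for the associated skew-symmetric $(1,1)$-tensor, the Killing property together with the first Bianchi identity yields the standard second-order identity
\[
\nabla _{X}\nabla _{Y}\xi -\nabla _{\nabla _{X}Y}\xi =R(X,\xi )Y,\qquad X,Y\in \mathfrak{X}(M).
\]
(The left-hand side is the Hessian $\nabla ^{2}_{X,Y}\xi$; for a Killing field one verifies that $\nabla ^{2}_{X,Y}\xi =R(X,\xi )Y$ is consistent with $\nabla ^{2}_{X,Y}\xi -\nabla ^{2}_{Y,X}\xi =R(X,Y)\xi$ via Bianchi.) Substituting this into the displayed hypothesis recasts it as the purely curvature-theoretic relation $k^{2}R(X,\xi )Y=g(Y,\xi )X-g(X,Y)\xi$.

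Next I would normalize by a constant homothety. Let $c>0$ be the constant length of $\xi$, put $\widetilde{g}=k^{-2}g$, and set $\widetilde{\xi }=(|k|/c)\,\xi$. Since a constant scaling of the metric leaves the Levi-Civita connection and the $(1,3)$-curvature tensor unchanged and preserves the Killing property, $\widetilde{\xi }$ is a unit Killing field for $\widetilde{g}$. A short bookkeeping of the scaling exponents (using $R(X,\widetilde{\xi })Y=(|k|/c)R(X,\xi )Y$ and $\widetilde{g}=k^{-2}g$) turns the curvature relation into exactly
\[
R(X,\widetilde{\xi })Y=\widetilde{g}(Y,\widetilde{\xi })X-\widetilde{g}(X,Y)\widetilde{\xi }\,;
\]
the factor $k^{-2}$ is chosen precisely so the $k^{2}$ disappears, and $|k|/c$ so that $\widetilde{\xi }$ has unit length.

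Finally I would build the Sasakian structure from $\widetilde{\xi }$. Define $\widetilde{\varphi }=-\nabla \widetilde{\xi }$ and $\widetilde{\eta }=\widetilde{g}(\cdot ,\widetilde{\xi })$. Because $\widetilde{\xi }$ is unit Killing one has $\nabla _{\widetilde{\xi }}\widetilde{\xi }=0$, so $\widetilde{\varphi }\widetilde{\xi }=0$ and $\widetilde{\eta }\circ \widetilde{\varphi }=0$, and $\widetilde{\varphi }$ is $\widetilde{g}$-skew. Applying the Killing identity to $\widetilde{\xi }$ gives
\[
(\nabla _{X}\widetilde{\varphi })Y=-\bigl(\nabla _{X}\nabla _{Y}\widetilde{\xi }-\nabla _{\nabla _{X}Y}\widetilde{\xi }\bigr)=-R(X,\widetilde{\xi })Y=\widetilde{g}(X,Y)\widetilde{\xi }-\widetilde{\eta }(Y)X.
\]
Setting $Y=\widetilde{\xi }$ yields $\widetilde{\varphi }^{2}X=-X+\widetilde{\eta }(X)\widetilde{\xi }$, and skew-symmetry then gives the compatibility $\widetilde{g}(\widetilde{\varphi }X,\widetilde{\varphi }Y)=\widetilde{g}(X,Y)-\widetilde{\eta }(X)\widetilde{\eta }(Y)$, so $(\widetilde{\varphi },\widetilde{\xi },\widetilde{\eta },\widetilde{g})$ is an almost contact metric structure. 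The displayed relation $(\nabla _{X}\widetilde{\varphi })Y=\widetilde{g}(X,Y)\widetilde{\xi }-\widetilde{\eta }(Y)X$ is exactly equation (2.2) with $\alpha =1$ and $\beta =0$, i.e. the Sasakian structural equation, so $(M,\widetilde{g})$ is Sasakian and $M$ is homothetic to a Sasakian manifold.

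I expect the main obstacle to be the last step's verification that $\widetilde{\varphi }=-\nabla \widetilde{\xi }$ genuinely defines an almost contact metric structure — that is, extracting $\widetilde{\varphi }^{2}=-I+\widetilde{\eta }\otimes \widetilde{\xi }$ from the curvature condition rather than merely obtaining a skew endomorphism — after which the Sasakian identity is immediate. The homothety bookkeeping in the second step is routine but must be carried out carefully, so that the factor $k^{2}$ and the length of $\xi$ are absorbed simultaneously by the two independent constants $k^{-2}$ and $|k|/c$.
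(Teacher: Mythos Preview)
The paper does not give its own proof of this statement: Theorem~\ref{th-2-1} is simply quoted from Okumura's paper \cite{11} (``Next, we state the following result of \cite{11}, which we shall use in the sequel'') and then used as a black box in the proofs of Theorems~\ref{th-3-1} and~\ref{th-3-2}. So there is nothing in the paper to compare against.

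That said, your argument is sound and is essentially the intended one. Kostant's identity $\nabla^{2}_{X,Y}\xi = R(X,\xi)Y$ for Killing fields converts the hypothesis into a curvature equation; the homothety $\widetilde{g}=k^{-2}g$ together with the rescaling $\widetilde{\xi}=(|k|/c)\,\xi$ normalizes both the coefficient $k^{2}$ and the length of $\xi$ simultaneously (and these are indeed independent normalizations, since nothing forces $c=|k|$). Your derivation of $\widetilde{\varphi}^{2}=-I+\widetilde{\eta}\otimes\widetilde{\xi}$ by evaluating $(\nabla_{X}\widetilde{\varphi})\widetilde{\xi}$ in two ways is clean and addresses exactly the point you flag as the delicate step; once that is in hand, the displayed equation $(\nabla_{X}\widetilde{\varphi})Y=\widetilde{g}(X,Y)\widetilde{\xi}-\widetilde{\eta}(Y)X$ is the Sasakian structural equation, and you are done. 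One cosmetic remark: you might state explicitly that $\nabla_{\xi}\xi=0$ follows from the combination of the Killing property and constant length (skew-symmetry of $\nabla\xi$ alone gives $g(\nabla_{X}\xi,\xi)=0$), though you do use both facts.
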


\section{Trans-Sasakian manifolds homothetic to Sasakian manifolds}

In this section we study compact and connected $3$-dimensional
trans-Sasakian manifolds and obtain conditions under which they are
homothetic to Sasakian manifolds. Our first result uses a bound on the Ricci
curvature of the trans-Sasakian manifold in the direction of the vector
field $\xi $.

\begin{theorem} \label{th-3-1}
Let $(M,\varphi ,\xi ,\eta ,g,\alpha ,\beta )$ be a $3$%
-dimensional compact and connected trans-Sasakian manifold. If the Ricci
curvature $Ric(\xi ,\xi )$ satisfies%
\begin{equation*}
0<Ric(\xi ,\xi )\leq 2\left( \alpha ^{2}+\beta ^{2}\right) \text{,}
\end{equation*}%
then $M$ is homothetic to a Sasakian manifold.
\end{theorem}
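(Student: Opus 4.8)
The plan is to verify the hypotheses of Theorem~\ref{th-2-1} for the Reeb field $\xi$, which already has constant length $1$. Two things must be arranged. First, $\xi$ must be Killing, which by (2.3) is equivalent to the vanishing of $\beta$: the symmetric part of $\nabla\xi$ is $\beta(g-\eta\otimes\eta)$, while $-\alpha\varphi$ is skew. Second, once $\beta=0$, the resulting structure must have $\alpha$ equal to a nonzero constant, so that the second-order identity in Theorem~\ref{th-2-1} holds with $k=1/\alpha$. Thus the proof splits into showing $\beta\equiv 0$ and then $\alpha$ constant.

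First I would extract the curvature condition. Pairing the formula of Lemma~\ref{lem-2-2} with $\xi$ and using $\eta\circ\varphi=0$ gives
\[
Ric(\xi,\xi)=2(\alpha^2-\beta^2)-2\,\xi(\beta).
\]
On the other hand (2.3) yields $\operatorname{div}\xi=2\beta$, hence $\operatorname{div}(\beta\xi)=\xi(\beta)+2\beta^2$. The hypothesis $Ric(\xi,\xi)\le 2(\alpha^2+\beta^2)$ is therefore exactly $\operatorname{div}(\beta\xi)\ge 0$. Since $M$ is compact the integral of this divergence vanishes, forcing $\operatorname{div}(\beta\xi)\equiv0$, i.e. $\xi(\beta)=-2\beta^2$ and $Ric(\xi,\xi)=2(\alpha^2+\beta^2)$ everywhere. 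Now apply the maximum principle to $\beta$: at a point where $\beta$ attains its maximum (or minimum) one has $\nabla\beta=0$, so $\xi(\beta)=0$ and thus $\beta=0$ there; hence $\max\beta=\min\beta=0$ and $\beta\equiv0$. The lower bound $0<Ric(\xi,\xi)=2\alpha^2$ then shows $\alpha$ is nowhere zero, so $\xi$ is a unit Killing field.

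It remains to show $\alpha$ is constant, and this I expect to be the main obstacle. With $\beta=0$, equations (2.2)--(2.3) give $\nabla_X\xi=-\alpha\varphi X$ and $(\nabla_X\varphi)Y=\alpha(g(X,Y)\xi-\eta(Y)X)$, whence a direct computation yields
\[
\nabla_X\nabla_Y\xi-\nabla_{\nabla_XY}\xi=-X(\alpha)\varphi Y+\alpha^2\big(\eta(Y)X-g(X,Y)\xi\big).
\]
For this to match $k^{-2}\big(g(Y,\xi)X-g(X,Y)\xi\big)$ with a \emph{constant} $k$, the term $X(\alpha)\varphi Y$ must vanish for all $X,Y$, i.e. $\nabla\alpha=0$. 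Lemma~\ref{lem-2-1} gives $\xi(\alpha)=-2\alpha\beta=0$, so $\nabla\alpha$ is horizontal; however the local structure equations (closedness of $\Omega$, the curvature symmetries) yield nothing beyond $\xi(\alpha)=0$, so the vanishing of the horizontal gradient of $\alpha$ must be forced globally from compactness. I would attempt this through a maximum-principle or Bochner-type integral argument for $\alpha$, exploiting that $\varphi\nabla\alpha$ is divergence-free and that $Ric(\xi,\xi)=2\alpha^2$ is already pinned down; establishing $\nabla\alpha\equiv0$ is the delicate point. Once $\alpha$ is a nonzero constant, the displayed identity becomes $\nabla_X\nabla_Y\xi-\nabla_{\nabla_XY}\xi=\alpha^2\big(g(Y,\xi)X-g(X,Y)\xi\big)$, so Theorem~\ref{th-2-1} applies with $k=1/\alpha$ and $M$ is homothetic to a Sasakian manifold.
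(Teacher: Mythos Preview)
Your derivation of $\xi(\beta)=-2\beta^2$ via $\operatorname{div}(\beta\xi)\ge 0$ is correct and in fact more direct than the paper's route: the paper reaches the same conclusion by plugging $\delta\eta=2\beta$, $\|d\eta\|^2=8\alpha^2$, and $\|\nabla\xi\|^2=2(\alpha^2+\beta^2)$ into Yano's integral formula
\[
\int_M\Big\{Ric(\xi,\xi)-\tfrac12\|d\eta\|^2+\|\nabla\xi\|^2-(\delta\eta)^2\Big\}=0.
\]
Your divergence identity is exactly the pointwise content of that formula here. Your argument for $\beta\equiv 0$ (evaluating $\xi(\beta)=-2\beta^2$ at the extrema of $\beta$) is likewise a slight streamlining of the paper's, which instead computes $H_\beta(\xi,\xi)=0$ at a local maximum to contradict negative definiteness of the Hessian, and then integrates $\operatorname{div}\xi=2\beta$.

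Where you stop short is precisely where the paper reuses the same trick. With $\beta=0$ and $\xi(\alpha)=0$ from Lemma~\ref{lem-2-1}, together with $\nabla_\xi\xi=0$, one has
\[
H_\alpha(\xi,\xi)=\xi\xi(\alpha)-(\nabla_\xi\xi)(\alpha)=0
\]
at \emph{every} point. The paper then argues that at a local maximum of a non-constant $\alpha$ this is incompatible with negative definiteness of the Hessian, forcing $\alpha$ constant; the strict positivity $Ric(\xi,\xi)=2\alpha^2>0$ then rules out $\alpha=0$. So the step you flagged as ``the main obstacle'' needs no Bochner-type integral input; it is handled by exactly the Hessian maximum-principle argument you already invoked for $\beta$, now applied to $\alpha$ along $\xi$.
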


\begin{proof} Using (2.3) we immediately compute%
\begin{equation}
\delta \eta =\text{div}\xi =2\beta \text{.}  \tag{3.1}
\end{equation}%
Also, since $d\eta (X,Y)=-2\alpha g(\varphi X,Y)$, we obtain%
\begin{equation}
\left\Vert d\eta \right\Vert ^{2}=8\alpha ^{2}\text{.}  \tag{3.2}
\end{equation}%
Now using (2.3), after some obvious calculations, we get%
\begin{equation}
\left\Vert \nabla \xi \right\Vert ^{2}=2(\alpha ^{2}+\beta ^{2})\text{.}
\tag{3.3}
\end{equation}%
Now, using (3.1)-(3.3) in the integral formula (cf. \cite{13})%
\begin{equation*}
\int_{M}\left\{ Ric(\xi ,\xi )-\frac{1}{2}\left\Vert d\eta
\right\Vert ^{2}+\left\Vert \nabla \xi \right\Vert ^{2}-(\delta \eta
)^{2}\right\} =0
\end{equation*}%
and the hypothesis of the theorem, we deduce that%
\begin{equation}
Ric(\xi ,\xi )=2\left( \alpha ^{2}+\beta ^{2}\right) \text{.}  \tag{3.4}
\end{equation}%
Using Lemma~\ref{lem-2-2}, we have%
\begin{equation*}
Ric(\xi ,\xi )=-2\xi (\beta )+2(\alpha ^{2}-\beta ^{2})
\end{equation*}%
which together with (3.4) gives%
\begin{equation}
\xi (\beta )=-2\beta ^{2}\text{.}  \tag{3.5}
\end{equation}

We claim that $\beta $ must be a constant. If $\beta $ is not a constant,
then on the compact $M$ it has a local maximum at some $p\in M$. We have $%
\left( \nabla \beta \right) (p)=0$ and the Hessian $H_{\beta }$ is negative
definite at this point $p$. However, using the equation (3.5), we have $\xi
(\beta )(p)=-2\left( \beta (p)\right) ^{2}=0$ and $H_{\beta }(\xi ,\xi
)(p)=\xi \xi (\beta )(p)=4\left( \beta (p)\right) ^{3}=0$, (where we used $%
\nabla _{\xi }\xi =0$), which yields a contradiction (as the Hessian is
negative definite at $p$). Hence, $\beta $ is a constant and this, combined
with Stokes' theorem applied to $\text{div}(\xi )=2\beta $, proves that $%
\beta =0$.

Since $\beta =0$, the Lemma~\ref{lem-2-1} gives $\xi (\alpha )= 0$.
We claim that $\alpha $ is a constant. If not, on compact $M$ the
smooth function $\alpha $ attains a local maximum at some point $p\in M$.
At this point, the Hessian $H_{\alpha }$ is negative definite.
However, for the unit vector field $\xi $,
we have $H_{\alpha }(\xi ,\xi )=0$, which fails to be negative definite at
point $p$, which is a contradiction. Now, that $\alpha $ is a non-zero
constant follows from the condition in the hypothesis. Thus, using (2.3), we
compute%
\begin{equation*}
\alpha ^{-2}\left( \nabla _{X}\nabla _{Y}\xi -\nabla _{\nabla _{X}Y}\xi
\right) =g(Y,\xi )X-g(X,Y)\xi \text{,}
\end{equation*}%
and this implies by Theorem~\ref{th-2-1} that $M$ is homothetic to a Sasakian
manifold.
\end{proof}

As a direct consequence of the above theorem we have the following result,
which has motivation from the fact that on a $(2n+1)$-dimensional Sasakian
manifold $(M,\varphi ,\xi ,\eta ,g)$ the Ricci operator satisfies $Q(\xi
)=2n\xi $.

\begin{corollary} \label{cor-3-1}
Let $(M,\varphi ,\xi ,\eta ,g,\alpha ,\beta )$ be a $3$%
-dimensional compact and connected trans-Sasakian manifold. If the vector
field $\xi $ satisfies $Q(\xi )=2\alpha ^{2}\xi \neq 0$, then $M$ is
homothetic to a Sasakian manifold.
\end{corollary}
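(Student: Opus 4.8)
The plan is to derive Corollary~\ref{cor-3-1} as a direct consequence of Theorem~\ref{th-3-1}, so the main task is to show that the hypothesis $Q(\xi)=2\alpha^{2}\xi\neq 0$ forces the Ricci-curvature pinching $0<Ric(\xi,\xi)\le 2(\alpha^{2}+\beta^{2})$ demanded there. First I would simply contract the given identity with $\xi$: since $g(\xi,\xi)=1$, the assumption $Q(\xi)=2\alpha^{2}\xi$ yields $Ric(\xi,\xi)=g(Q\xi,\xi)=2\alpha^{2}$. The condition $Q(\xi)\neq 0$ together with $Q(\xi)=2\alpha^{2}\xi$ immediately gives $\alpha^{2}\neq 0$, so that $Ric(\xi,\xi)=2\alpha^{2}>0$. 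This establishes the strict lower bound at once.

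For the upper bound, observe that $Ric(\xi,\xi)=2\alpha^{2}\le 2\alpha^{2}+2\beta^{2}=2(\alpha^{2}+\beta^{2})$ holds trivially because $\beta^{2}\ge 0$ pointwise. Hence both inequalities
\[
0<Ric(\xi,\xi)\le 2(\alpha^{2}+\beta^{2})
\]
are satisfied on all of $M$, and Theorem~\ref{th-3-1} applies verbatim to conclude that $M$ is homothetic to a Sasakian manifold.

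There is essentially no obstacle here, since the argument is a one-line contraction followed by two elementary inequalities; the substance of the work has already been carried out in Theorem~\ref{th-3-1}. The only point that deserves a moment of care is reconciling this computation with Lemma~\ref{lem-2-2}. From that lemma one has $Ric(\xi,\xi)=2(\alpha^{2}-\beta^{2})-2\xi(\beta)$, so the hypothesis $Q(\xi)=2\alpha^{2}\xi$ is equivalent to the pointwise relation $\xi(\beta)=-2\beta^{2}$ together with the vanishing of the $\varphi(\nabla\alpha)-\nabla\beta$ terms transverse to $\xi$; I would note in passing that this is consistent with equation (3.5) in the proof of Theorem~\ref{th-3-1}, where the same relation $\xi(\beta)=-2\beta^{2}$ emerged, but it is not needed for the deduction. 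The clean route is to invoke only $Ric(\xi,\xi)=2\alpha^{2}$ and feed it into Theorem~\ref{th-3-1}.
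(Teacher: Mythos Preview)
Your main argument is correct and is exactly what the paper intends: the corollary is presented there as a direct consequence of Theorem~\ref{th-3-1}, and your contraction $Ric(\xi,\xi)=g(Q\xi,\xi)=2\alpha^{2}$ followed by the trivial bounds $0<2\alpha^{2}\le 2(\alpha^{2}+\beta^{2})$ is precisely the intended verification.

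One small correction to your closing aside: taking the $\xi$-component of Lemma~\ref{lem-2-2} under the hypothesis $Q(\xi)=2\alpha^{2}\xi$ gives $2\alpha^{2}=2(\alpha^{2}-\beta^{2})-2\xi(\beta)$, hence $\xi(\beta)=-\beta^{2}$, not $-2\beta^{2}$. Equation~(3.5) in the proof of Theorem~\ref{th-3-1} arises from the \emph{a posteriori} equality $Ric(\xi,\xi)=2(\alpha^{2}+\beta^{2})$ forced by the integral formula, not from the hypothesis $Ric(\xi,\xi)=2\alpha^{2}$ itself, so the two relations are not the same (indeed, combining them gives $\beta=0$ directly). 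Since you correctly note that this digression is not needed for the deduction, simply drop it.
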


As pointed out earlier, on a $(2n+1)$-dimensional Sasakian manifold $%
(M,\varphi ,\xi ,\eta ,g)$, the Ricci operator satisfies $Q(\xi )=2n\xi $,
that is, the Reeb vector field $\xi $ is an eigenvector of the Ricci
operator. This motivates the question of whether a $3$-dimensional
trans-Sasakian manifold $(M,\varphi ,\xi ,\eta ,g,\alpha ,\beta )$
satisfying $Q(\xi )=\lambda \xi $ for a non-zero constant $\lambda $, is
necessarily homothetic to a Sasakian manifold. We answer this question for
compact connected $3$-dimensional trans-Sasakian manifolds and show that
they are homothetic to Sasakian manifolds.

\begin{theorem} \label{th-3-2}
Let $(M,\varphi ,\xi ,\eta ,g,\alpha ,\beta )$ be a $3$-dimensional
compact and connected trans-Sasakian manifold. Then $M$ is
homothetic to a Sasakian manifold if and only if the vector field $\xi $
satisfies $Q(\xi )=\lambda \xi $ for a non-zero constant $\lambda $.
\end{theorem}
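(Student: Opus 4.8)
The plan is to prove the theorem as a biconditional, with the forward direction being nearly immediate and the reverse direction carrying the substantive content. For the forward implication, if $M$ is homothetic to a Sasakian manifold, then after rescaling the metric the structure becomes Sasakian of type $(\alpha,0)$ with $\alpha$ constant, and the standard Sasakian identity $Q(\xi)=2n\xi$ in dimension $2n+1=3$ gives $Q(\xi)=2\xi$; tracing through the homothety factor yields $Q(\xi)=\lambda\xi$ with $\lambda$ a positive constant. So the real work is the converse.

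\textbf{The converse.} Assume $Q(\xi)=\lambda\xi$ with $\lambda$ a nonzero constant. First I would compute $Ric(\xi,\xi)=g(Q(\xi),\xi)=\lambda$. On the other hand, Lemma~\ref{lem-2-2} together with the computation already performed in the proof of Theorem~\ref{th-3-1} gives $Ric(\xi,\xi)=-2\xi(\beta)+2(\alpha^{2}-\beta^{2})$. Next I would exploit the full vector identity $Q(\xi)=\lambda\xi$, not merely its $\xi$-component: feeding the expression from Lemma~\ref{lem-2-2} into $Q(\xi)=\lambda\xi$ yields
\begin{equation*}
\varphi(\nabla\alpha)-\nabla\beta+2(\alpha^{2}-\beta^{2})\xi-g(\nabla\beta,\xi)\xi=\lambda\xi.
\end{equation*}
Taking the component orthogonal to $\xi$ forces $\varphi(\nabla\alpha)-\nabla\beta+g(\nabla\beta,\xi)\xi=0$ on the contact distribution, which is the key structural constraint relating the gradients of $\alpha$ and $\beta$.

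\textbf{Reducing to constancy of $\beta$.} The strategy now mirrors Theorem~\ref{th-3-1}: I want to show $\beta\equiv 0$ and $\alpha$ is a nonzero constant, so that Theorem~\ref{th-2-1} applies with $k=\alpha^{-1}$. The cleanest route is to produce the relation $\xi(\beta)=-2\beta^{2}$ again. Since $\lambda$ is constant, differentiating $Ric(\xi,\xi)=\lambda$ in the $\xi$-direction and combining with $\xi(\beta)=-2\beta^2$ (or directly substituting the two expressions for $Ric(\xi,\xi)$) should, after using Lemma~\ref{lem-2-1} to control $\xi(\alpha)=-2\alpha\beta$, pin down $\xi(\beta)$ in terms of $\alpha,\beta$. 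With that relation in hand, the maximum-principle argument from the proof of Theorem~\ref{th-3-1} runs verbatim: at a local maximum $p$ of $\beta$ on the compact manifold, $(\nabla\beta)(p)=0$ and $H_{\beta}(\xi,\xi)(p)=\xi\xi(\beta)(p)$ computes to $0$ while the Hessian must be negative semidefinite, a contradiction unless $\beta$ is constant; then Stokes' theorem applied to $\operatorname{div}\xi=2\beta$ forces $\beta=0$. Once $\beta=0$, Lemma~\ref{lem-2-1} gives $\xi(\alpha)=0$, a second maximum-principle argument (again using $H_{\alpha}(\xi,\xi)=0$) shows $\alpha$ is constant, and $\alpha\neq 0$ follows because $\lambda=Ric(\xi,\xi)=2\alpha^{2}\neq 0$.

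\textbf{The main obstacle} I anticipate is extracting the correct scalar relation governing $\xi(\beta)$ purely from the algebraic hypothesis $Q(\xi)=\lambda\xi$, since unlike Theorem~\ref{th-3-1} there is no integral-formula shortcut forcing $Ric(\xi,\xi)=2(\alpha^2+\beta^2)$; here I have only $Ric(\xi,\xi)=\lambda$ constant. The subtlety is whether constancy of $Ric(\xi,\xi)$ alone, combined with Lemma~\ref{lem-2-1} and the orthogonal-component identity above, suffices to yield $\xi(\beta)=-2\beta^2$ (or another relation whose square/cube vanishes at critical values of $\beta$), so that the Hopf-type maximum argument closes. If the direct differentiation does not immediately give this, the fallback is to differentiate the full Lemma~\ref{lem-2-2} identity along $\xi$, using $\nabla_\xi\xi=\beta(\xi-\eta(\xi)\xi)=0$ from (2.3), and isolate the $\xi$-component. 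Once that relation is secured, everything else is a transcription of the already-established techniques, and the conclusion follows from Theorem~\ref{th-2-1} exactly as in Theorem~\ref{th-3-1}.
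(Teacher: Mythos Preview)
Your setup is right up to the point where you extract, from $Q(\xi)=\lambda\xi$ and Lemma~\ref{lem-2-2}, the scalar relation
\[
\xi(\beta)=-\tfrac{\lambda}{2}+\alpha^{2}-\beta^{2}
\]
and the transverse relation $\varphi(\nabla\alpha)-\nabla\beta+\xi(\beta)\,\xi=0$. The gap is in what comes next. You want to reproduce the identity $\xi(\beta)=-2\beta^{2}$ from Theorem~\ref{th-3-1} and run the Hessian maximum-principle argument on $\beta$, but that identity simply does not follow here: the only information you have is the displayed equation above, with $\alpha$ still unknown. At a maximum $p$ of $\beta$ you get $\xi(\beta)(p)=0$, hence $\alpha(p)^{2}-\beta(p)^{2}=\lambda/2$, and then $H_{\beta}(\xi,\xi)(p)=\xi\xi(\beta)(p)=2\alpha\xi(\alpha)(p)-2\beta\xi(\beta)(p)=-4\alpha(p)^{2}\beta(p)$, which is perfectly compatible with negative semidefiniteness when $\beta(p)>0$. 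So the maximum-principle route, and your proposed fallback of differentiating along $\xi$, do not close.

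The paper proceeds differently and in a way that avoids this obstacle: it first shows $\alpha$ is constant, then $\beta$. Applying $\varphi$ to the transverse relation and using Lemma~\ref{lem-2-1} gives $\nabla\alpha=-2\alpha\beta\,\xi-\varphi(\nabla\beta)$. The key computation is then a divergence: using (2.2), (2.3) and the elementary fact that $\sum g(\varphi(Ae_i),e_i)=0$ for any symmetric operator $A$ (applied to the Hessian operator $A_{\beta}$), one checks directly that $\operatorname{div}\bigl(\varphi(\nabla\beta)+2\alpha\beta\,\xi\bigr)=0$. Hence $\Delta\alpha=\operatorname{div}(\nabla\alpha)=0$ on the compact $M$, so $\alpha$ is constant. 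With $\alpha$ constant, the transverse relation reduces to $-\nabla\beta=\bigl(\tfrac{\lambda}{2}-(\alpha^{2}-\beta^{2})\bigr)\xi$; taking divergence and using $\operatorname{div}\xi=2\beta$ together with $\xi(\beta)=-\tfrac{\lambda}{2}+\alpha^{2}-\beta^{2}$ yields $\Delta\beta=0$, so $\beta$ is constant, and then $\beta=0$ by Stokes. Finally $\alpha\neq 0$ since $\lambda\neq 0$, and Theorem~\ref{th-2-1} finishes as you indicated. The missing idea in your plan is precisely this harmonic-function route via the divergence of $\varphi(\nabla\beta)+2\alpha\beta\,\xi$.
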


\begin{proof}
Using $Q(\xi )=\lambda \xi $ in Lemma~\ref{lem-2-2}, we have%
\begin{equation}
\varphi (\nabla \alpha )-\nabla \beta =\left( \lambda +\xi (\beta )-2(\alpha
^{2}-\beta ^{2})\right) \xi \text{.}  \tag{3.7}
\end{equation}%
Taking the inner product with $\xi $ in the above equation, we obtain%
\begin{equation}
\xi (\beta )=-\frac{\lambda }{2}+(\alpha ^{2}-\beta ^{2})\text{.}  \tag{3.8}
\end{equation}%
Inserting this value in (3.7), we have%
\begin{equation}
\varphi (\nabla \alpha )-\nabla \beta =\left( \frac{\lambda }{2}-(\alpha
^{2}-\beta ^{2})\right) \xi   \tag{3.9}
\end{equation}%
and applying $\varphi $ to the above equation, we obtain%
\begin{equation}
\nabla \alpha =-2\alpha \beta \xi -\varphi (\nabla \beta )\text{.}
\tag{3.10}
\end{equation}%
If $A$ is a symmetric operator on the trans-Sasakian manifold $M$, we can
choose a local orthonormal frame that diagonalizes $A$ and consequently, we
have%
\begin{equation}
\sum g(\varphi (Ae_{i}),e_{i})=0\text{.}  \tag{3.11}
\end{equation}%
Now for $X\in \mathfrak{X}(M)$, we compute%
\begin{equation*}
\nabla _{X}\left( \varphi (\nabla \beta )+2\alpha \beta \xi \right) =\left(
\nabla _{X}\varphi \right) (\nabla \beta )+\varphi \left( A_{\beta }X\right)
+2X(\alpha \beta )\xi +2\alpha \beta \nabla _{X}\xi
\end{equation*}%
where $A_{\beta }X=\nabla _{X}\nabla \beta $ is a symmetric operator $%
A_{\beta }:\mathfrak{X}(M)\rightarrow \mathfrak{X}(M)$. Taking the inner
product with $X$ in above equation and using equations (2.2) and (2.3),
after some easy calculations we arrive at%
\begin{eqnarray*}
g\left( \nabla _{X}\left( \varphi (\nabla \beta )+2\alpha \beta \xi \right)
,X\right)  &=& \alpha X(\beta )\eta (X)-\alpha \xi (\beta )g(X,X) \\
&& +\; \beta g\left( \varphi X,\nabla \beta \right) \eta (X) \\
&& + \; g\left( \varphi \left( A_{\beta }X\right) ,X\right) +2X(\alpha \beta
)\eta (X) \\
&&+ \; 2\alpha \beta ^{2}g(X,X)-2\alpha \beta ^{2}\left( \eta (X)\right) ^{2}%
\text{.}
\end{eqnarray*}%
Taking trace in the equation above, in view of the equation (3.11), we get%
\begin{equation}
\text{div}\left( \varphi (\nabla \beta )+2\alpha \beta \xi \right) =-2\alpha
\xi (\beta )+2\xi (\alpha \beta )+4\alpha \beta ^{2}=0\text{,}  \tag{3.12}
\end{equation}%
where we used the fact that $\xi (\alpha )=-2\alpha \beta $. Thus using
(3.12) in the equation (3.10), we conclude that $\Delta \alpha =\text{div}%
(\nabla \alpha )=0$ on compact $M$, which proves that $\alpha $ is a
constant. Using the constant $\alpha $ in the equation (3.9), we get%
\begin{equation*}
-\nabla \beta =\left( \frac{\lambda }{2}-(\alpha ^{2}-\beta ^{2})\right) \xi
\end{equation*}%
which together with the equation (3.8) gives%
\begin{eqnarray*}
\Delta \beta  &=&-2\beta \xi (\beta )-\left( \frac{\lambda }{2}-(\alpha
^{2}-\beta ^{2})\right) \text{div}\xi  \\
&=&-2\beta \left( -\frac{\lambda }{2}+(\alpha ^{2}-\beta ^{2})\right)
-2\beta \left( \frac{\lambda }{2}-(\alpha ^{2}-\beta ^{2})\right)  \\
&=&0\text{.}
\end{eqnarray*}%
Here we used the fact that $\text{div}\xi =2\beta $. Thus $\beta $ is a
constant, which together with Stokes' theorem and $\text{div}\xi =2\beta $
proves that $\beta =0$. If $\alpha =0$, then (3.7) would imply $\lambda =0$,
which is a contradiction. Consequently, $\alpha $ is a non-zero constant
which by the equation (3.1) satisfies%
\begin{equation*}
\alpha ^{-2}\left( \nabla _{X}\nabla _{Y}\xi -\nabla _{\nabla _{X}Y}\xi
\right) =g(Y,\xi )-g(X,Y)\xi \text{.}
\end{equation*}%
This proves that $M$ is homothetic to a Sasakian manifold. The converse is
obvious.
\end{proof}

\section{A characterization of cosymplectic manifolds}

In this section, we study $3$-dimensional compact trans-Sasakian manifolds,
and obtain a characterization of cosymplectic manifolds. Let $(M,\varphi
,\xi ,\eta ,g,\alpha ,\beta )$ be a $3$-dimensional trans-Sasakian manifold.
Then for each point $p\in M$ there is a neighbourhood $U$ of $p$, where we
have a local orthonormal frame $\{e,\varphi e,\xi \}$ for a unit vector
field $e$ on $U$ called an adapted frame. Using the equations (2.1), (2.2)
and (2.3), we obtain the following local structure equations defined on $U$%
\begin{equation}
\nabla _{e}\xi = \beta e-\alpha \varphi e, \quad \nabla _{\varphi e}\xi
= \alpha e+\beta \varphi e, \quad \nabla _{\xi }\xi = 0 , \tag{4.1}
\end{equation}
\begin{equation}
\nabla _{e}e = \gamma \varphi e-\beta \xi , \quad
\nabla _{\varphi e}e = -\delta \varphi e -\alpha \xi ,\quad
\nabla _{\xi }e = \lambda \varphi e , \tag{4.2}
\end{equation}
\begin{equation}
\nabla _{e}\varphi e = -\gamma e + \alpha \xi , \quad
\nabla _{\varphi e}\varphi e = \delta e-\beta \xi , \quad
\nabla _{\xi }\varphi e = - \lambda e , \tag{4.3}
\end{equation}
where $\gamma $, $\delta $, $\lambda $ are smooth functions defined on $U$.
Using the above equations, we compute%
\begin{equation*}
R(e,\varphi e)\xi =\left( e(\alpha )-\varphi e(\beta )\right) e+\left(
e(\beta )+\varphi e(\alpha )\right) \varphi e
\end{equation*}%
\begin{equation*}
R(\varphi e,\xi )e=\left( \varphi e(\lambda )+\xi (\delta )+\beta \delta
-\gamma \alpha -\gamma \lambda \right) \varphi e+\left( \xi (\alpha
)+2\alpha \beta \right) \xi
\end{equation*}%
\begin{equation*}
R(\xi ,e)\varphi e=\left( e(\lambda )-\xi (\gamma )-\beta \gamma -\delta
\alpha -\delta \lambda \right) e+\left( \xi (\alpha )+2\alpha \beta \right)
\xi \text{.}
\end{equation*}%
Adding these three equations, we conclude that%
\begin{equation}
e(\alpha )-\varphi e(\beta )+e(\lambda )-\xi (\gamma )=\beta \gamma +\delta
\alpha +\delta \lambda \text{,}  \tag{4.4}
\end{equation}%
\begin{equation}
e(\beta )+\varphi e(\alpha )+\varphi e(\lambda )+\xi (\delta )=\gamma \alpha
+\gamma \lambda -\beta \delta \text{,}  \tag{4.5}
\end{equation}%
and the third component gives the result in the Lemma~\ref{lem-2-1}. Also, we have

\begin{equation*}
R(\xi ,e)e=\left( \xi (\gamma )-e(\lambda )+\beta \gamma +\alpha \delta
+\lambda \delta \right) \varphi e+\left( -\xi (\beta )+\alpha ^{2}-\beta
^{2}\right) \xi
\end{equation*}%
and%
\begin{equation*}
R(\xi ,\varphi e)\varphi e=\left( \xi (\delta )+\varphi e(\lambda )+\beta
\delta -\alpha \gamma -\lambda \gamma \right) e+\left( -\xi (\beta )+\alpha
^{2}-\beta ^{2}\right) \xi
\end{equation*}%
Using the two equations above in $Q(\xi )=R(\xi ,e)e+R(\xi ,\varphi
e)\varphi e$, we obtain%
\begin{eqnarray*}
Q(\xi ) &=&\left( \xi (\delta )+\varphi e(\lambda )+\beta \delta -\alpha
\gamma -\lambda \gamma \right) e \\
&&+\left( \xi (\gamma )-e(\lambda )+\beta \gamma +\alpha \delta +\lambda
\delta \right) \varphi e \\
&&+2\left( -\xi (\beta )+\alpha ^{2}-\beta ^{2}\right) \xi \text{.}
\end{eqnarray*}%
This together with the equations (4.4) and (4.5) gives%
\begin{equation}
Q(\xi )=-\left( e(\beta )+\varphi e(\alpha )\right) e+\left( e(\alpha
)-\varphi e(\beta )\right) \varphi e+2\left( -\xi (\beta )+\alpha ^{2}-\beta
^{2}\right) \xi \text{.}  \tag{4.6}
\end{equation}%
Recall that in Theorem~\ref{th-3-2}, the vector field $\xi $ being an eigenvector of
the Ricci operator corresponding to a non-zero eigenvalue makes the
trans-Sasakian manifold homothetic to a Sasakian manifold. \ Similarly, we
have the following characterization of cosymplectic manifolds.

\begin{theorem} \label{th-4-1}
Let $(M,\varphi ,\xi ,\eta ,g,\alpha ,\beta )$ be a $3$%
-dimensional compact and connected trans-Sasakian manifold. Then $M$ is a
cosymplectic manifold if and only if the Ricci operator $Q$ annihilates the
vector field $\xi $.
\end{theorem}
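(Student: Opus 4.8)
The plan is to recognize this as the $\lambda=0$ counterpart of Theorem~\ref{th-3-2}: I would run the argument of that proof essentially verbatim and observe that only its final step reverses. The converse is immediate, since if $M$ is cosymplectic then $\alpha=\beta=0$ and Lemma~\ref{lem-2-2} gives $Q(\xi)=0$ at once.

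For the forward direction, I would assume $Q(\xi)=0$ and substitute into Lemma~\ref{lem-2-2}. This is precisely equation (3.7) with $\lambda=0$, so the derived identities (3.8)--(3.10) specialize accordingly; in particular $\xi(\beta)=\alpha^{2}-\beta^{2}$ and $\nabla\alpha=-2\alpha\beta\,\xi-\varphi(\nabla\beta)$. Since the divergence identity (3.12) does not involve $\lambda$, it holds unchanged, and combining it with (3.10) yields $\Delta\alpha=\text{div}(\nabla\alpha)=0$ on the compact manifold $M$. Hence $\alpha$ is constant.

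With $\alpha$ constant I would then repeat the $\beta$-computation of Theorem~\ref{th-3-2}: equation (3.9) becomes $\nabla\beta=(\alpha^{2}-\beta^{2})\xi$, and using $\xi(\beta)=\alpha^{2}-\beta^{2}$ together with $\text{div}\,\xi=2\beta$ one finds $\Delta\beta=0$, so $\beta$ is constant; Stokes' theorem applied to $\text{div}\,\xi=2\beta$ then forces $\beta=0$.

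The one genuinely new step, which I expect to be the crux, is the final conclusion. In Theorem~\ref{th-3-2} the hypothesis $\lambda\neq0$ was used precisely to rule out $\alpha=0$; here the same mechanism runs in reverse. With $\beta=0$ in hand, equation (3.8) with $\lambda=0$ reads $0=\xi(\beta)=\alpha^{2}-\beta^{2}=\alpha^{2}$, which forces $\alpha=0$ as well. Thus $\alpha=\beta=0$ and $M$ is cosymplectic. The only point requiring care is to keep the sign of the $(\alpha^{2}-\beta^{2})$ term correct through (3.8)--(3.9), so that the same integral and maximum-principle arguments that manufactured a nonzero constant $\alpha$ when $\lambda\neq0$ now collapse $\alpha$ to zero when $\lambda=0$.
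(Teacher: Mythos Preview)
Your argument is correct. It is precisely the proof of Theorem~\ref{th-3-2} run with $\lambda=0$, and every step survives that specialization: Lemma~\ref{lem-2-2} yields (3.7)--(3.10), the divergence identity (3.12) is independent of $\lambda$, so $\Delta\alpha=0$ gives $\alpha$ constant; then $\Delta\beta=0$ and Stokes give $\beta=0$; finally (3.8) collapses to $\alpha^{2}=0$.

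The paper takes a somewhat different route for Theorem~\ref{th-4-1}. Rather than recycling the global machinery of Theorem~\ref{th-3-2}, it works in a local adapted frame $\{e,\varphi e,\xi\}$, derives the local expression (4.6) for $Q(\xi)$, and computes $\Delta\alpha=0$ directly in that frame from the component equations (4.7). The endgame also differs: instead of showing $\Delta\beta=0$ first, the paper argues by cases on the constant $\alpha$. If $\alpha\neq 0$, Lemma~\ref{lem-2-1} forces $\beta=0$, and then $\xi(\beta)=\alpha^{2}-\beta^{2}$ gives the contradiction $\alpha=0$; hence $\alpha=0$, whereupon $\xi(\beta)=-\beta^{2}$ becomes $\mathrm{div}(\beta\xi)=\beta^{2}$, and Stokes gives $\beta=0$. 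Your approach has the advantage of reusing Theorem~\ref{th-3-2} wholesale and avoiding any local-frame computation; the paper's approach is self-contained within Section~4 and makes the local structure equations do the work.
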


\begin{proof}
Suppose that $Q(\xi ) = 0$ holds. Then (4.6) gives
\begin{equation}
e(\beta )=-\varphi e(\alpha )\text{,\quad }e(\alpha )=\varphi e(\beta )\text{%
, and }\xi (\beta )=\alpha ^{2}-\beta ^{2}\text{.}  \tag{4.7}
\end{equation}%
Applying Lemma~\ref{lem-2-1}
and the equations (2.2), (2.3), (4.1)-(4.3) and (4.7), we
obtain%
\begin{eqnarray*}
\Delta \alpha  &=&ee(\alpha )+\varphi e\varphi e(\alpha )+\xi \xi (\alpha
)-\nabla _{e}e(\alpha )-\nabla _{\varphi e}\varphi e(\alpha )-\nabla _{\xi
}\xi (\alpha ) \\
&=&[e,\varphi e](\beta )-2\xi (\alpha \beta )+\gamma e(\beta )-\delta
\varphi e(\beta )-4\alpha \beta ^{2} \\
&=&\left( \nabla _{e}\varphi e\right) (\beta )-\left( \nabla _{\varphi
e}e\right) (\beta )-2\xi (\alpha \beta )+\gamma e(\beta )-\delta \varphi
e(\beta )-4\alpha \beta ^{2} \\
&=&\left( -\gamma e+\alpha \xi \right) (\beta )-\left( -\delta \varphi
e-\alpha \xi \right) -2\xi (\alpha \beta )+\gamma e(\beta )-\delta \varphi
e(\beta )-4\alpha \beta ^{2} \\
&=&2\alpha \xi (\beta )-2\xi (\alpha \beta )-4\alpha \beta ^{2}=0.
\end{eqnarray*}%
Thus thanks to compactness of $M$ we have proved that $\alpha $ is a
constant. If $\alpha \neq 0$, then Lemma~\ref{lem-2-1}, implies that $\beta =0$ and
consequently the equation (4.7) gives $\alpha =0$, which is a contradiction.
Hence $\alpha =0$ and the equation (4.7) gives $\xi (\beta )=-\beta ^{2}$,
that is, $\text{div}(\beta \xi )=\beta ^{2}$, where we used $\text{div}\xi
=2\beta $, which follows from the equation (2.3). Using Stokes' theorem in $%
\text{div}(\beta \xi )=\beta ^{2}$, we obtain $\beta =0$. That is, $M$ is a
cosymplectic manifold. Conversely, if $M$ is a cosymplectic manifold, then
the equation (4.6) gives that $Q(\xi )=0$.
\end{proof}

\end{document}